\newcommand{\CP}{\mathbb{CP}}
\def\Z{\mathbb{Z}}
\newtheorem{theorem}{Theorem}[section]
\newtheorem{proposition}[theorem]{Proposition}
\newtheorem{corollary}[theorem]{Corollary}
\newtheorem{remark}[theorem]{Remark}
\newtheorem{example}[theorem]{Example}
\def \x {\times}
\def \eu{{\text{e}}}
\begin{document}
\title[Existence of broken Lefschetz fibrations]
{Existence of broken Lefschetz fibrations}
\author[R. \.Inan\c{c} Baykur] {R.\, \.Inan\c{c} Baykur}
\address{Max-Planck-Institut f\"ur Mathematik, Vivatgasse 7, Bonn D-53111, Germany}
\email{baykur@mpim-bonn.mpg.de, baykur@math.columbia.edu}

\begin{abstract}
We prove that every closed oriented smooth $4$-manifold $X$ admits a broken Lefschetz fibration (aka singular Lefschetz fibration) over the $2$-sphere. Given any closed orientable surface $F$ of square zero in $X$, we can choose the fibration so that $F$ is a fiber. Moreover, we can arrange it so that there is only one Lefschetz critical point when the Euler characteristic $\eu(X)$ is odd, and none when $\eu(X)$ is even. We make use of topological modifications of smooth maps with fold and cusp singularities due to Saeki and Levine, and thus we get alternative proofs of previous existence results. Also shown is the existence of broken Lefschetz pencils with connected fibers on any near-symplectic $4$-manifold.
\end{abstract}
\maketitle

\section{Introduction}

Broken Lefschetz fibrations and pencils were introduced by Denis Auroux, Simon Donaldson and Ludmil Katzarkov in \cite{ADK} under the name ``singular Lefschetz fibrations and pencils'', where the authors extended the earlier results of Donaldson on Lefschetz pencils and symplectic structures to the near-symplectic setting. Using approximately holomorphic techniques, they proved the existence of broken Lefschetz pencils on any closed oriented $4$-manifold with $b^+>0$, the point being that these admit near-symplectic structures. After blowing-up these manifolds at the base locus of the pencil, one gets a broken Lefschetz fibration over the $2$-sphere. In the same article, the authors gave an example of a broken Lefschetz fibration on a $4$-manifold which is not near-symplectic, the $4$-sphere, provoking the curiosity about which $4$-manifolds support broken Lefschetz fibrations \cite{GK, P}.

The existence of broken Lefschetz fibrations on arbitrary $4$-manifolds was later studied by David Gay and Rob Kirby by means of handlebody manipulations \cite{GK}. They constructed achiral broken Lefschetz fibrations on $2$-handlebodies that make up the given $4$-manifold, which could be assembled along the open books on their boundaries using Eliashberg's classification of overtwisted contact structures and Giroux's correspondence between open books and contact structures. ``Achirality'', i.e. allowing Lefschetz critical points with nonstandard orientations, was ultimately needed to match the open books. This way the authors produced \textit{achiral} broken Lefschetz fibrations from arbitrary closed oriented $4$-manifolds to the $2$-sphere. Moreover, they conjectured that achirality was essential in this statement, and in particular that $\CP^2$ would not admit a broken Lefschetz fibration. As we will discuss below, this conjecture turns out to be false. (Also see Remark \ref{ImmersedRoundImage} and Example \ref{BLFexample}.)

To establish the existence of broken Lefschetz fibrations on arbitrary closed oriented $4$-manifolds, we adopt a rather elementary approach. We translate the problem into the classical theory of singularities of smooth maps so to employ the methods \pagebreak of elimination of singularities due to Saeki \cite{S} and Levine \cite{Lev}. The reduction of the number of Lefschetz critical points is an extra feature that we obtain as a result of this approach. Our first theorem is:

\begin{theorem} \label{BLFexistence}
Let $X$ be a closed oriented smooth $4$-manifold and $F$ be a closed orientable surface of square zero in $X$. Then there exists a broken Lefschetz fibration from $X$ to the $2$-sphere where $F$ is a fiber. Moreover, the fibration can be chosen so that there is only one Lefschetz critical point if $\eu(X)$ is odd and none if $\eu(X)$ is even.
\end{theorem}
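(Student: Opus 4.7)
The plan is to build a smooth map $X \to S^2$ that already has $F$ as a fiber, and then massage its singular locus into the canonical singularities of a broken Lefschetz fibration using the elimination techniques of Saeki and Levine referenced in the introduction. Since $F^2 = 0$, its normal bundle in $X$ is trivial, so a tubular neighborhood $\nu(F)$ is diffeomorphic to $F \x D^2$. Projecting $\nu(F)$ onto $D^2$, including $D^2 \hookrightarrow S^2$ as a small disk, and extending arbitrarily to the complement gives an initial smooth map $f_0 \co X \to S^2$ with $F = f_0^{-1}(0)$.

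After a small perturbation relative to $\nu(F)$, I would replace $f_0$ by a stable map $f_1$ whose singular locus consists only of fold arcs and isolated cusps, via Thom's classification of stable singularities in source--target dimension $(4,2)$. Invoking Levine's cusp-elimination trades each cusp for additional fold arcs, producing a map $f_2$ whose critical locus is a disjoint union of fold circles, each either definite or indefinite. Saeki's move then converts every definite fold circle into an indefinite fold circle together with a single Lefschetz critical point -- the local model being the birth of a vanishing $S^2$ collapsing to a point. The upshot is a broken Lefschetz fibration $f_3 \co X \to S^2$ with $F$ still a fiber, albeit with possibly many Lefschetz critical points.

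To cut this count down to the advertised $0$ or $1$, I note that each Lefschetz critical point contributes $+1$ to $\eu(X)$ while each indefinite fold circle contributes $0$, so the parity of the remaining Lefschetz points is forced to equal $\eu(X) \pmod 2$. Pairs of Lefschetz critical points whose vanishing cycles can be made to coincide may be cancelled against one another by merging them into a cusp (reversing Levine's move) and then pushing the cusp across an indefinite fold, where it dissolves into a pure fold configuration. Iterating leaves either no Lefschetz critical point or exactly one, matching the parity.

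The main obstacle is this last step: creating Lefschetz points in arbitrary configurations is routine, but cancelling them in pairs requires the vanishing cycles to be made equal. I expect this to be arranged by first introducing auxiliary indefinite fold circles as a kind of stabilization, enlarging the fiber topology so that one has enough flexibility to slide vanishing cycles onto one another using Saeki's and Levine's moves as the local toolkit. Throughout, care must be taken that none of these modifications crosses $\nu(F)$, so that $F$ is preserved as a fiber of the final broken Lefschetz fibration.
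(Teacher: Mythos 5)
There is a genuine gap, and it stems from a misattribution of where the Lefschetz critical points come from, which then forces you into a cancellation problem you cannot solve. Saeki's elimination of definite folds produces no Lefschetz singularities at all: it is a move in the classical jet-space sense (passing through $X \,\#\, S^1\times S^3$ and surgering off a $D^2\times S^2$), and the output is a generic map with only \emph{indefinite} folds and cusps. The Lefschetz critical points in a broken Lefschetz fibration arise from a genuinely different modification, due to Lekili: once all definite folds are gone, every surviving cusp is forced to have index $3/2$, and a local homotopy trades each such cusp for an arc of indefinite fold together with \emph{one} Lefschetz singular point. Your proposal assigns Lefschetz points to Saeki's move and omits the cusp modification entirely, so the map $f_3$ you construct would still be cuspidal, not a broken Lefschetz fibration.

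You also misstate what Levine's cusp elimination does. It does not trade each cusp for additional fold arcs unconditionally; rather, it cancels pairs of cusps against each other along joining curves, reducing the total number of cusps to $\eu(X)\bmod 2$, i.e.\ to $0$ or $1$. The correct order of operations is to apply Levine's pairwise cusp cancellation and Saeki's definite-fold elimination at the level of generic maps, arriving at a map with only indefinite folds and $0$ or $1$ cusps, and only \emph{then} apply the cusp-to-Lefschetz trade. With that ordering the count of Lefschetz points is automatically $0$ or $1$, and the pair-cancellation of Lefschetz critical points you propose in the last step is not needed. Which is fortunate, because as you yourself flag, that step is the weak link: making two vanishing cycles coincide after arbitrary stabilizations is precisely the kind of difficult global matching problem the paper's argument is designed to avoid, and you give no mechanism for it. As written, your proof therefore does not close; repairing it requires importing the cusp modification and reordering the eliminations as above.
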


Another problem that remained unanswered was to establish the existence of broken Lefschetz pencils on manifolds with $b^+ >0$ without appealing to the approximately holomorphic theory. Note that achirality was invoked in Gay and Kirby's attempt to recapture this fact as well. A refinement of our arguments for the above theorem will allow us to provide a topological proof of:

\begin{theorem} \label{BLPexistence}
Let $X$ be a closed oriented smooth $4$-manifold containing a closed orientable surface $F$ of positive square. Then $X$ admits a broken Lefschetz pencil where $F$ is a fiber. Moreover, the pencil can be chosen so that there is only one Lefschetz critical point if $\eu(X)$ is odd and none if $\eu(X)$ is even.
\end{theorem}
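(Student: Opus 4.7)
The plan is to reduce Theorem \ref{BLPexistence} to Theorem \ref{BLFexistence} via a blow-up and blow-down argument. Setting $n = F \cdot F > 0$, I would pick $n$ distinct points $p_1, \dots, p_n$ on $F$ and blow up $X$ at each of them to obtain $\tilde X = X \# n \overline{\CP^2}$. The proper transform $\tilde F$ of $F$ now has $\tilde F \cdot \tilde F = 0$, while the exceptional spheres $E_1,\dots,E_n$ are disjoint $(-1)$-spheres, each meeting $\tilde F$ transversely in one point. The first step is to apply a refined version of Theorem \ref{BLFexistence} to the pair $(\tilde X, \tilde F)$, producing a broken Lefschetz fibration $\tilde f \co \tilde X \to S^2$ that has $\tilde F$ as a regular fiber and each $E_i$ as a section.

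The refinement can be arranged by carrying out the Saeki--Levine singularity-elimination steps in the complement of a preassigned tubular neighborhood $N = \bigsqcup_i N_i$ of $\bigcup_i E_i$, inside each $N_i$ of which $\tilde f$ is declared equal to the standard $S^2$-projection of the $\mathcal{O}(-1)$ disc bundle (whose zero section is $E_i$). Compatibility of this local model on $\partial N_i$ with the product structure near $\tilde F$ is automatic, since $E_i$ meets $\tilde F$ transversely in one point and $\tilde F$ has trivial normal bundle in $\tilde X$. Given such a refined $\tilde f$, I would simultaneously blow down the $E_i$: the local $\mathcal{O}(-1)$-model collapses to the standard pencil model $[z_1 \co z_2]$ near each $p_i$, so the collapsed map $f \co X \setminus \{p_1, \dots, p_n\} \to S^2$ is a broken Lefschetz pencil with $F = \pi(\tilde F)$ as a regular fiber. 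The blow-down does not change the number of Lefschetz critical points, and Theorem \ref{BLFexistence} provides the BLF on $\tilde X$ with Lefschetz critical count $\ell \in \{0,1\}$ having the parity of $\eu(\tilde X) = \eu(X) + n$.

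The principal obstacle is step one: forcing the BLF produced by Theorem \ref{BLFexistence} to realise the prescribed exceptional spheres as honest sections. This demands that the elimination moves be executable relative to the fixed $\mathcal{O}(-1)$-models on $N$, with no round or Lefschetz singularity intruding into $N$ during the construction. A secondary concern is parity: when $F \cdot F$ is odd, the parity $\eu(X) + n \pmod 2$ delivered by the argument above differs from the parity $\eu(X) \pmod 2$ that appears in the statement, so in that case an additional local creation or cancellation of a Lefschetz critical point (for example by a flip or cusp move applied after the refined BLF is constructed) is required to reconcile the counts.
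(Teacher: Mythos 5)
Your overall route is the same as the paper's: blow up at $m = [F]^2$ points on $F$ to produce $\tilde X$, $\tilde F$, and the exceptional spheres $E_i$; construct a broken Lefschetz fibration on $\tilde X$ with $\tilde F$ a fiber and each $E_i$ a section; then blow the $E_i$ back down. Your "principal obstacle" (running the singularity elimination relative to the $\mathcal{O}(-1)$-models on the $N_i$) is exactly what the paper handles by defining the initial map $g_N$ on the whole of $N = N_0 \cup N_1 \cup \cdots \cup N_m$ from the outset, with $g_i\colon N_i \to S^2$ the disk-bundle projection, arranged to agree with the product projection $g_0$ on the overlaps $E_i \cap N_0$, and then extending to $\tilde X$ by the collar-plus-north-pole trick; the Saeki and Levine homotopies are then kept relative to all of $N$, not just $N_0$. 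So on this point you are simply requesting what the paper supplies.

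Your "secondary concern" about parity, however, is not fixable the way you suggest, and it exposes a real subtlety. By Thom's theorem, for a generic map from a closed $4$-manifold to $S^2$ the number of cusps mod $2$ is determined by the Euler characteristic of the \emph{domain}; this mod-$2$ count is a homotopy invariant of the map, which is exactly why Levine's procedure cannot go below one cusp when $\chi$ is odd. The cusp modification converts each residual cusp into exactly one Lefschetz critical point, and blowing down does not change the Lefschetz count, so the achievable parity is $\eu(\tilde X) = \eu(X) + [F]^2 \pmod 2$. No local "flip", "cusp move", or creation/cancellation can change this: those moves change the number of Lefschetz critical points plus cusps by an even amount (or trade cusps for Lefschetz points one-for-one), precisely because the mod-$2$ invariant must be preserved. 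So if you want the stated count to read off $\eu(X)$ rather than $\eu(\tilde X)$, either the statement must absorb the $[F]^2$ shift, or some additional topological input (not a local singularity move) is required; the reconciliation you sketch does not exist. You should sanity-check this against the pencil of lines on $\CP^2$: there $[F]^2 = 1$, $\eu(\CP^2) = 3$, and the pencil has zero Lefschetz critical points, consistent with the parity $\eu(\tilde X) = 4$ but not with $\eu(X) = 3$.
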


\noindent Recall that in a pencil the base locus is by definition non-empty, and that the fiber of a pencil is  defined by adjoining the base points to the preimage of a regular value. The $b^+>0$ condition is equivalent to the existence of a closed orientable surface of positive square in the $4$-manifold, which appears as the fiber of the pencil that is Poincar\'e dual of a high multiple of the near-symplectic form in Auroux, Donaldson and Katzarkov construction. Thus the first part of the above theorem is a reformulation of the existence result in \cite{ADK}.

A broken Lefschetz fibration on a connected $4$-manifold may have disconnected fibers; examples can be found in \cite{B, P}. An open problem posted by Perutz was which closed $4$-manifolds support near-symplectic broken Lefschetz pencils with embedded singular image and connected fibers \cite{P}. We will address this problem with the following proposition, whose proof demonstrates an advantage of broken Lefschetz fibrations with immersed singular images.

\begin{proposition} \label{connected}
Let $f_0 : X \to S^2$ be a broken Lefschetz fibration with disconnected fibers whose round locus is connected and has embedded image. Then there exists a homotopy $f_t: X \to S^2$, $t \in [0,1]$, where all but one $f_t$ are broken Lefschetz fibrations, so that $f_1: X \to S^2$ is a broken Lefschetz fibration with connected fibers whose round locus is connected and has embedded image.
\end{proposition}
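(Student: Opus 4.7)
\medskip
\noindent\textbf{Proof plan.} My plan is to construct a single-degeneracy homotopy from $f_0$ that reroutes the indefinite-fold vanishing data, so that the $1$-handle across the round circle merges two fiber components. Since $\gamma \subset S^2$ is an embedded circle and the round locus $Z \subset X$ is connected, $\gamma$ separates $S^2$ into two disks $D_{+}, D_{-}$ with respective fibers $\Sigma_{+}, \Sigma_{-}$, related by an indefinite-fold $1$-handle attachment; let $\alpha$ denote its attaching $S^0$, lying in the fiber on the lower-genus side. Disconnectedness of the fibers of $f_0$ forces both endpoints of $\alpha$ to lie in one and the same component, and my goal is to rearrange the fibration so that these two endpoints land in distinct components.

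The central tool, as signalled by the proposition's preamble, is the flexibility of broken Lefschetz fibrations with immersed round images. I would push a small arc of $\gamma$ across another piece of itself through a single codimension-one degeneracy --- for example a tangency of fold arcs or a beak-to-beak event --- thereby producing the unique non-BLF moment $f_{t^*}$. Before and after $t^*$, the maps $f_t$ remain genuine BLFs, with possibly immersed round image during the intermediate subinterval. Connectedness of $Z$ means that this local modification can be propagated around the round circle, and by running the push ``over'' a suitably chosen portion of the fiber I can reroute the two endpoints of $\alpha$ into different components of the fiber; completing the homotopy back to an embedded configuration then yields a BLF $f_1$ with connected fibers.

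The main obstacle I foresee is simultaneously satisfying: (i) exactly one non-BLF moment in the homotopy, (ii) embedded round image at $t = 1$, and (iii) a genuinely nontrivial change in the attaching data of the $1$-handle. These requirements pull against each other because generic tangency events in $S^2$ create or destroy crossings of fold arcs in pairs, so to pass from an embedded to an embedded configuration with only one degeneracy one must choose a very particular codimension-one event --- for instance an exterior tangency in which fold arcs touch and then separate without actually intersecting, or an event that exchanges data with a nearby Lefschetz critical point. Verifying that such a modification both preserves embedded-ness of the round image and nontrivially alters fiber connectedness, using the connectedness of $Z$ to transport the local change globally, is the crux of the argument.
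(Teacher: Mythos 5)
Your proposal correctly isolates the three constraints to satisfy, but the codimension-one degeneracy you are hunting for is not a tangency of fold arcs. As the preamble to Section~\ref{Sec:Connected} shows, a Reidemeister-II slide of one fold arc over another (and similarly its inverse and R-III, when permitted by the arrows) is realized \emph{upstairs} by an honest isotopy of $X$ through broken Lefschetz fibrations — no degenerate moment at all, and no change to which fiber component the indefinite $1$-handle's attaching $S^0$ lands in. So pushing a small arc of $\gamma$ across itself and back via a tangency does not reroute the attaching data; it produces a fibration smoothly isotopic to the one you started with. The ``crux'' you flag at the end is precisely where your argument has no mechanism, and a tangency will not supply one.

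The paper's proof supplies that mechanism with the \emph{flip and slip} move, and the missing ingredient is the introduction of cusp singularities. One first performs two simultaneous \emph{flips} of the round image: each flip momentarily wrinkles a fold arc, creating a pair of cusps and a new self-intersection of the round image; the cusps are then immediately traded for Lefschetz critical points via the cusp modification of \cite{Lek}. This wrinkling step is the unique non-BLF moment in the homotopy. After both flips the round image has two double points and $f_t$ has acquired \emph{four new Lefschetz critical points}; only then does the large \emph{slip} — an R-II-type slide of the kind you were reaching for, but now across the new configuration — reorganize the fold arcs (entirely within BLFs) so that the fibers over each region are connected and the round image is embedded again. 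Two consequences worth internalizing: the fiber-reconnection unavoidably passes through cusp creation, not fold tangency, and the resulting $f_1$ has strictly more Lefschetz critical points than $f_0$. Your plan implicitly aimed to preserve that count, which is not what happens and is likely not achievable.
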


\noindent It is known through the results on reducing the number of near-symplectic circles \cite{P0, Ta} and the main theorem of \cite{ADK} that one can obtain a near-symplectic pencil on any closed $4$-manifold with $b^+>0$ whose round locus is connected and has embedded image. So the above proposition implies that:

\begin{corollary} \label{Always}
Any closed oriented smooth $4$-manifold with $b^+>0$ admits a near-symplectic broken Lefschetz pencil with connected round locus, embedded round image, and connected fibers.
\end{corollary}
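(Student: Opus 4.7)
The plan is to combine the cited near-symplectic existence results with Proposition \ref{connected}. By \cite{ADK} together with the reductions in \cite{P0, Ta}, any closed oriented $4$-manifold $X$ with $b^+>0$ already admits a near-symplectic broken Lefschetz pencil $f$ whose round locus is a single circle mapping injectively into $S^2$. Hence the only property of the corollary still to arrange is the connectedness of the regular fibers.

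To achieve connected fibers, I would first blow up $X$ at the base locus of $f$ to obtain a broken Lefschetz fibration $f_0 \co \widetilde{X} \to S^2$ on $\widetilde{X} = X \# k\overline{\CP^2}$, whose exceptional divisors are disjoint $(-1)$-sphere sections of $f_0$ and whose round locus is unchanged (connected, with embedded image). If the fibers of $f_0$ are already connected, blow down and we are done; otherwise Proposition \ref{connected} produces a homotopy from $f_0$, through broken Lefschetz fibrations except at a single moment, to a broken Lefschetz fibration $f_1 \co \widetilde{X} \to S^2$ with connected fibers, connected round locus, and embedded round image. Since the homotopy is locally supported in a controlled region associated to the round locus, the exceptional sections can be arranged generically within their homology classes to remain disjoint from its support, and therefore persist as disjoint $(-1)$-sphere sections of $f_1$. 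Blowing them back down returns a broken Lefschetz pencil $f'$ on $X$ with connected fibers, connected round locus, and embedded round image.

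The remaining step, which I expect to be the main obstacle, is to exhibit a near-symplectic form for which $f'$ is near-symplectic. Since the fiber of $f$ had positive square, the fibers of $f'$ are homologically essential of positive square. A Thurston-Gompf type patching argument adapted to the near-symplectic setting, as developed in \cite{ADK}, then produces a closed $2$-form on $X$ that is symplectic on every smooth fiber of $f'$ and vanishes transversally along the image of the round locus. This form is near-symplectic and makes $f'$ a near-symplectic broken Lefschetz pencil, completing the corollary. An alternative but more delicate route is to track a near-symplectic form through each elementary step of the homotopy and the blow-down, perturbing it at each stage; invoking a near-symplectic existence result directly from the combinatorial data of $f'$ is cleaner, but either approach must confront the fact that the homotopy of Proposition \ref{connected} is a priori only a homotopy of smooth maps, not of near-symplectic pencils.
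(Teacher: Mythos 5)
Your proposal follows essentially the same route as the paper: start from the connected-round-locus, embedded-round-image pencil supplied by \cite{ADK, P0, Ta}, blow up, apply Proposition \ref{connected}, blow back down, and recover near-symplecticity via the Thurston--Gompf-type converse of \cite{ADK}. The paper settles the near-symplecticity step a bit more directly than you do---it observes that the homotopy of Proposition \ref{connected} is supported away from a neighborhood of a fixed connected fiber $F$ on which the original near-symplectic form $\omega$ evaluates positively, and once all fibers are connected (hence homologous to $F$) the same $\omega$ evaluates positively on every fiber, so \cite{ADK}'s converse applies immediately---but your appeal to the positive square of the pencil fiber leads to the same conclusion.
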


Definitions and essential facts regarding the singularities we consider in our article are given in the next section, Theorems \ref{BLFexistence} and \ref{BLPexistence} are proved in Section \ref{Sec:Existence}, and proofs of Proposition \ref{connected} and Corollary \ref{Always} along with a short digression on isotopies of round singularities are contained in Section \ref{Sec:Connected}.

\medskip
\section{Lefschetz, fold and cusp singularities}

Let $X$ and $\Sigma$ be compact oriented manifolds of dimension four and two, respectively, and $f\colon\, X\to \Sigma$ be a smooth map. The map $f$ is said to have a \emph{Lefschetz singularity} at a point $x \in Int(X)$, if around $x$ and $f(x)$ one can choose orientation preserving charts so that $f$ conforms the complex local model
\[(u, v) \mapsto u^2 + v^2 .\]
The map $f$ is said to have a \emph{round singularity} (aka \emph{indefinite quadratic singularity}) along a $1$-manifold $Z \subset X$ if around every $z \in Z$, there are coordinates $(t, x_1, x_2, x_3)$ with $t$ a local coordinate on $Z$, in terms of which $f$ is given by
\[(t, x_1, x_2, x_3) \mapsto (t, x_1^2 + x_2^2 - x_3^2).\]
A \emph{broken Lefschetz fibration} is then defined as a smooth surjective map $f\colon\, X\to \Sigma$ which is submersion everywhere except for a finite set of points $C$ and a finite collection of circles $Z \subset X \setminus C$, where it has Lefschetz singularities and round singularities, respectively \cite{ADK}. We call the $1$-manifold $Z$ the \emph{round locus} and its image $f(Z)$ the \emph{round image} of $f$. A \emph{broken Lefschetz pencil} is defined similarly when $\Sigma = S^2$, by assuming that there is also a nonempty finite set of points $B$ in $X \setminus (Z \cup C)$ where $f$ conforms the complex local model
\[(u, v) \mapsto u / v, \]
and such that $f: X \setminus B \to S^2$ is a broken Lefschetz fibration. The points in $B$ are called \emph{base points} of the pencil. Generalizing the Thurston-Gompf construction, Auroux, Donaldson and Katzarkov proved that if there exists a $2$-dimensional real cohomology class on $X$ evaluating positively on all fiber components of a broken Lefschetz fibration (resp. pencil), then $X$ can be equipped with a deformation class of near-symplectic forms so that all the fiber components of the fibration (resp. pencil) are symplectic \cite{ADK}. Whenever this cohomological conditions is satisfied, we will call the broken Lefschetz fibration (resp. pencil) \emph{near-symplectic}. Lastly, an \emph{achiral} broken Lefschetz fibration or a pencil is defined by allowing the local models around Lefschetz singular points and the base points to be given by orientation reversing charts.

Let us recall the simplest types of singularities for smooth maps, before we compare them with the ones we have listed above. Let $y \in Int(X)$ be a point where $\text{rank}(d f_y) < 2$. The map $f\colon\, X\to \Sigma$ is said to have a \emph{fold singularity} at a point $y \in Int(X)$, if around $y$ and its image one can choose local coordinates so that the map is given by
\[(t, x_1, x_2, x_3) \mapsto (t, \pm x_1^2 \pm x_2^2 \pm x_3^2) ,\]
and a \emph{cusp singularity} if instead the map is locally given by
\[(t, x_1, x_2, x_3) \mapsto (t, x_1^3 + t x_1 \pm x_2^2 \pm x_3^2) . \]
We say that $y$ is a \emph{definite fold point} if all the coefficients of $x_i^2$, $i=1,2,3$ in the first local model is of the same sign. It is called an \emph{indefinite fold point} otherwise. From a special case of Thom's transversality theorem it follows that any smooth map from an $n$-dimensional manifold to a $2$-manifold, for $n \geq 2$, can be approximated arbitrarily well by a map with only fold and cusp singularities \cite{T, Lev0}. We will call a smooth map $f\colon\, X \to \Sigma$ with only fold and cusp singularities a \emph{generic map}. The singular set of $f$ is a finite set of circles, which are composed of finitely many cusp points and a finite collection of arcs and circles of fold singularities. From the very definitions we see that round singularities are circles of indefinite fold singularities.

As the set of generic maps is open and dense in $C^{\infty}(X, \Sigma)$ topologized with the $C^{\infty}$ topology, any broken Lefschetz fibration can be homotoped to a map with only fold and cusp singularities. In the next section, combining different topological modifications of singularities (\cite{Lev, S, Lek}), we will show how one can effectively eliminate the definite fold singularities and cusps so to homotope a generic map to a broken Lefschetz fibration, implying the abundance of broken Lefschetz fibrations.

\medskip
\section{Proofs of main theorems} \label{Sec:Existence}

\begin{proof} [Proof of Theorem \ref{BLFexistence}]
Given an arbitrary closed smooth oriented $4$-manifold $X$ and a square zero surface $F$ in it, we will now show how to construct a broken Lefschetz fibration $f:\, X \to S^2$ so that $F$ is a regular fiber. The surface $F$ is not required to be homologically essential, so one always has such an $F$.

We begin with constructing a surjective continuous map $g: X \to S^2$ which is a smooth fibration around $F$. This is the Thom-Pontrjagin construction. Let $N \cong D^2 \x F$ be a tubular neighborhood of $F$, and $pr_1: N \to D^2$ be the obvious projection. Let $r: D^2 \to S^2$ be the quotient map defined by collapsing $\partial D^2$ to a point. We can assume that $r( \partial D^2) = \{\text{NP}\}$ and $r(0) = \{\text{SP}\}$, where $\{\text{NP}\}$ and $\{\text{SP}\}$ are the north pole and the south pole in $S^2$, respectively. The composition $r \circ pr_1$ is a surjective map from $N$ to $S^2$, which we can extend all over $X$ by mapping $X \setminus N$ to $\{\text{NP}\}$. Hence we get a surjective continuous map $g: X \to S^2$, which is smooth away from $g^{-1}(\{\text{NP}\})$. Let $N_0 \subset \text{Int}(N)$ be the preimage of the southern hemisphere under $g$.

The map $g$ can be approximated arbitrarily well by a generic map $h: X \to S^2$ relative to $N_0$, where $g$ is already a submersion. However, this map may have definite fold points, which we will avoid by applying Saeki's algorithm given in \cite{S}.\footnote{I am grateful to Yasha Eliashberg for a conversation that led me to find out about Saeki's work.} For the convenience of the reader, we will give an outline of this algorithm here. Set $\hat{X} = X \,\#\, S^1 \x S^3 = (X \setminus D_1) \, \cup (S^1 \x S^3 \setminus D_2)$, where $D_1, D_2$ are $4$-disks, and let $\gamma = S^1 \x \{pt\} \subset S^1 \x S^3 \setminus D_2$. Take a continuous map $\hat{g}: \hat{X} \to S^2$ which is homotopic to $g$ on $X \setminus D_1$ by a homotopy supported around the connect sum region and is null-homotopic on $S^1 \x S^3 \setminus D_2$. Then approximate $\hat{g}$ by a generic map $\hat{h}: X \to S^2$ relative to $N_0$ instead. Saeki then shows how to homotope $\hat{h}$ to a new generic map which contains $\gamma$ in its singular set, and such that $\gamma$ contains an arc of definite fold points and exactly two cusp points which constitute a ``matching pair'' as described in \cite{Lev}. We further homotope the map so that all other singular circles containing definite fold points become circles of the same type as $\gamma$, and are bounded by disjoint disks which intersect with the singular set only along their boundaries. We then match one cusp point on any of these circles with a cusp point on $\gamma$, and reduce the singular set by one circle at a time. The joining curves used to perform this operation are found in abundance, and can be taken away from $N_0$. Finally, we homotope the map again to turn $\gamma$ into a singular curve with only definite fold points, so to apply a trick where one surgers out a tubular neighborhood $S^1 \x D^3$ of $\gamma$ and glues in a $D^2 \x S^2$ to which the map on the complement extends without any new singular locus. (This step of the proof does not work when the base is of higher genus; see \cite{S}.) We get $X \# S^4$, and a map $f$ which is homotopic to $\hat{h}$ on $X \setminus D_1$. A slight modification of this process, as Saeki argues, guarantees that $f$ is indeed homotopic to the very first map $g$ on the whole of $X$.

\begin{figure}[ht] \label{CuspModification}
\begin{center}
\includegraphics[scale=0.7]{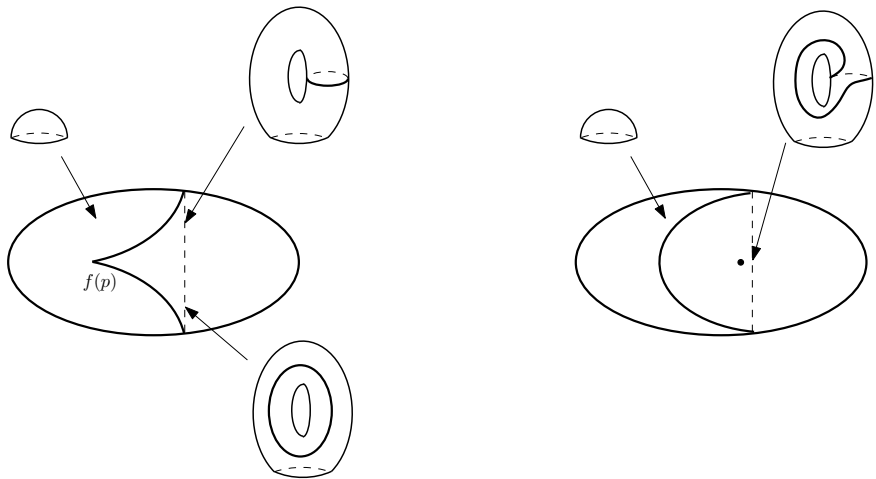}
\caption{\footnotesize ``Cusp modification''. On the left: The generic map $f$ from a small ball neighborhood of the cuspidal singularity to $D^2$. The two cycles that get collapsed across the singular set are depicted on the punctured torus fibers. On the right: The broken Lefschetz fibration over $D^2$ to replace the cusp model. The vanishing cycle of the Lefschetz singular point is as indicated.}
\end{center}
\end{figure}

Thus we observe that elimination of definite fold points can be performed away from $N_0$, and by keeping the homotopy in each step relative to $N_0$. The map $f: X \to S^2$ we get at the end is a smooth surjective map with only indefinite fold points and possibly with some cusp points. These cuspidal singularities appear as isolated points that bound arcs of \textit{indefinite} fold singularities, and therefore they can only acquire the index $3/2$ in the lack of definite fold points. In other words, we can choose local charts around them where $f$ is given by
\[(t, x_1, x_2, x_3) \mapsto (t, x_1^3 + t x_1 + x_2^2 - x_3^2). \]
This restricts the topology of preimages (`fibers') of $f$ around the cusp point to the case worked out by Lekili in \cite{Lek}. Let $B$ be a ball neighborhood of such a cusp point $p$ that intersects with the singular set of $f$ along an arc $C$ containing no other cusp points. The image of this arc under $f$, is a cusp curve in $D^2$. The restriction $f: B \to D^2$ has a simple topology where the preimages over one region of the base $D^2$ split by $f(C)$ are punctured tori, whereas the preimages over the other region are disks. (See Figure 1.) Observe that for $t = -1/2$ the map $f(-1/2, x_1, x_2, x_3)$ is a Morse function with two canceling critical points of indices one and two. Lekili describes a broken Lefschetz fibration over $D^2$ whose singular set consists of an arc of indefinite fold singularities and a Lefschetz singular point. (See Figure 1.) The attaching circle is carefully chosen so that not only the total space of this fibration is $D^4$ but also it matches with $f$ on the boundary. Hence, we can modify $f$ around $p$ by trading these two local models. Remarkably (thus, see Remark \ref{uniqueness}) one can perform this modification by a homotopy supported in $B$, as shown in \cite{Lek}.

Applying this modification to every cusp point we get a smooth surjective map $\bar{f}: X \to S^2$ with only Lefschetz and indefinite fold singularities; namely, a broken Lefschetz fibration. Since we have kept all the homotopies relative to $N_0$, then $\bar{f}^{-1}(\{\text{SP}\}) = g^{-1}(\{\text{SP}\}) = F$, so $F$ is a fiber.

The cardinality of the number of cusp points has the same parity as $\eu(X)$ due to a theorem of Thom, which can be reduced to zero or one after a homotopy as proved by Levine in \cite{Lev}. To prove the last part of our statement, we first apply Levine's elimination procedure to the map $\hat{h}$ above, while once again taking all the homotopies relative to $N_0$. This is possible since these homotopies can be supported around the singular circles and bands connecting them ---as in the case of Saeki's procedure. Afterward, we eliminate the definite fold points as before to get a generic map $f: X \to S^2$ with only one cusp point if $\eu(X)$ is odd, and none if $\eu(X)$ is even. (Note that the only new cusp points produced in Saeki's algorithm are eliminated right after along with the singular circles they are contained in.) Therefore we already have a broken fibration (with no Lefschetz singularities) when $\eu(X)$ is even, and otherwise we modify the only cusp point to get a Lefschetz critical point as above. This completes the proof of Theorem \ref{BLFexistence}.
\end{proof}

\begin{remark} \label{uniqueness} \rm 
The most natural question that follows is what the `uniqueness' theorem for broken Lefschetz fibrations would be (\cite{GK, P}). It would be interesting to see if a parametric version of Saeki's elimination procedure could be used to connect two homotopic broken Lefschetz fibrations $f_0, f_1 : X \to S^2$ by a one-parameter family of maps $f_t: X \to S^2$, $0 \leq t \leq 1$, which are broken Lefschetz fibrations except for finitely many values of $t$. One could then apply some basic moves to obtain one fibration from another in the same homotopy class, using the main result in Lekili \cite{Lek}. This is of great interest when considered with Perutz's program, where one calculates invariants associated to broken Lefschetz fibrations, which are conjectured to be invariants of the underlying smooth structure \cite{P}.
\end{remark}

Next we prove Theorem \ref{BLPexistence}:

\begin{proof}[Proof of Theorem \ref{BLPexistence} ]
Let $F$ be an orientable surface in $X$ with $[F]^2 = m > 0$. Let $\tilde{X}$ be the blow-up of $X$ at $m$ points on $F$, $E_1, \ldots, E_m$ be the exceptional spheres, and $\tilde{F}$ be the proper transform of $F$. Each $E_i$ intersects with $\tilde{F}$ positively at a point. Let $N_i$ be disjoint tubular neighborhoods of $E_i$ and $N_0$ be a tubular neighborhood of $\tilde{F}$ in $X$. Now set $g_0 : N_0 \cong D^2 \x \tilde{F} \to D^2$ to be the projection onto the first disk component, and $g_i : N_i \to S^2$ to be the radial projection onto $S^2$. Arranging the latter so that all the disk sections $E_i \cap N_0$ are mapped onto the southern hemisphere of $S^2$, we can define a surjective map $g_N$ from the closed set $N = \bigcup_{i=0}^m N_i$ to $S^2$. Note that $g_N$ is a submersion on $\text{Int}(N)$; preimages of the interior points of the southern hemisphere are copies of $\tilde{F}$, preimages of the interior points of the northern hemisphere are disks, and those of the equatorial points are copies of $\tilde{F}$ with $m$ disjoint open disks removed.

We need to extend $g_N$ to a continuous map $g: \tilde{X} \to S^2$. A way to do this is by first defining the map on a collar of the $\partial (X \setminus N)$ using $g|_{\partial N}$, and then mapping all the remaining points in the interior to $\{\text{NP}\}$. With such a $g$ in hand, we can run the proof of our previous theorem, mutadis mutandis to get a broken Lefschetz fibration $\bar{f}: \tilde{X} \to S^2$. The only difference is that at the end every fiber intersects each $E_i$ exactly at one point, so $E_i$ is a section, for all $i = 1, \ldots, m$. Thus we can blow-down $E_i$ to get the broken Lefschetz pencil on $X$ with $F$ as its fiber. The assertion on reducing the number of Lefschetz critical points to one and zero, respectively, depending on the parity of $\eu(X)$, is obtained in the same way as before.
\end{proof}

\begin{remark} \rm
In this article we restricted our attention solely on the existence of broken Lefschetz fibrations and pencils. Using advances in realizing prescribed singular loci, one can possibly improve our proof by making the zero locus of a given integral near-symplectic form coincide with the circles of indefinite folds, and with the same local models around each circle, as in \cite{ADK}. A theorem of Thom implies that if a $1$-manifold $Z$ is the singular locus of a generic map then its homology class in $\Z_2$ coefficients is nullhomologous, and Saeki has shown that this condition is sufficient to realize $Z$ as the singular locus of a generic map. Therefore the problem boils down to realizing certain $Z$, subject to Thom's homological condition, as the \textit{indefinite} singular locus of a generic map instead.
\end{remark}
 
\begin{remark} \label{ConnectedRoundLocus} \rm 
In \cite{Lek}, Lekili gives a new proof of reducing the number of near-symplectic circles, due to Perutz and Taubes independently \cite{P0, Ta}. Disguised in his proof is a simple algorithm to reduce the number of singular circles of a generic map with no definite singularities to one. Thus in Theorem \ref{BLFexistence} (resp.\,in Theorem \ref{BLPexistence}), one can instead choose the fibration (resp. the pencil) so that the round locus is connected in the expense of introducing new Lefschetz singularities. So one has the freedom to choose between having at most one Lefschetz singular point or one round singular curve. (The author is neutral about this.)
\end{remark}


\begin{remark} \label{ImmersedRoundImage} \rm
Round singular images of all the broken Lefschetz fibrations we constructed in the above proofs can be assumed to be immersions with double points. Saeki's procedure produces a stable map, which encompasses this property \cite{S}. The local modification around a cusp point preserves this property as well \cite{Lek}. However, Auroux-Donaldson-Katzarkov in \cite{ADK} and Gay-Kirby in \cite{GK} were able to arrange the round singular image of their broken Lefschetz pencils and achiral broken Lefschetz fibrations to have no double points, which we fail to reproduce here. In fact when one constructs broken Lefschetz fibrations using round handle additions, the round singular image is necessarily embedded. Gay and Kirby, who relied on round handles in their work, built this property into their definition of a broken Lefschetz fibration when conjecturing that not all closed $4$-manifolds support these fibrations \cite{GK}. Nevertheless, we will give below an example of a broken Lefschetz fibration on $\CP^2$ which meets the precise requirements of Gay and Kirby. Our construction is an incidence of a local modification around a negative node which was recently pointed out to the author by Lekili after a discussion on this very topic. It turns out that one can perturb a negative Lefschetz singularity locally to get a generic map with only indefinite fold and index $3/2$ cusp singularities, which yields a broken Lefschetz fibration after employing the cusp modification. Invoking the main theorem of \cite{GK}, one can then get broken Lefschetz fibrations on arbitrary $4$-manifolds with embedded round images. In short, there is no obstruction to having embeded round images either. (This is now contained in the new version of Lekili's preprint; see \cite{Lek}.)
\end{remark}

\begin{figure}[ht] \label{BLFonCP2}
\begin{center}
\includegraphics[scale=0.7]{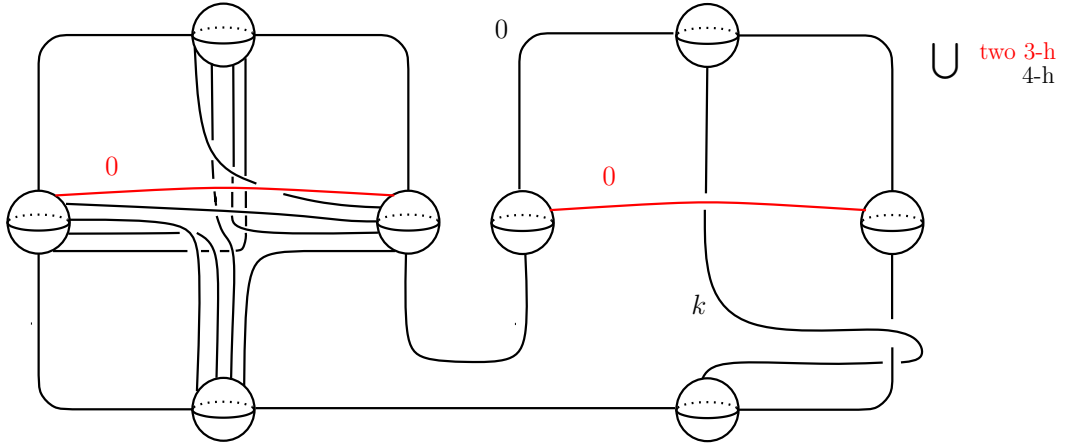}
\caption{\footnotesize A broken Lefschetz fibration on $\CP^2$.}
\end{center}
\end{figure}

\begin{example}  \rm [A broken Lefschetz fibration on $\CP^2$] \label{BLFexample}
We will construct an explicit broken Lefschetz fibration on $\CP^2$ using handle diagrams. For the descriptions used here the reader can turn to \cite{B}. Our example consists of a genus two Lefschetz fibration over a disk with three Lefschetz singularities (the higher side), a trivial sphere fibration over a disk (the lower side), and a round cobordism in between that consists of two round $2$-handles as shown in Figure 2. Importantly, there is a twisted gluing of the $T^3$ boundaries that appear in the round cobordism. (Observe that if we attach only one of the round $2$-handles to the higher side, and the other one dually as a round $1$-handle to the lower side, both pieces we get have $T^3$ boundaries which we can identify in many ways.) This manifests itself in how the $2$-handle from the lower side with arbitrary framing $k$ is included in our diagram. Lastly, recall that to use such a diagram one needs to verify that the indicated round $2$-handle attachments are legitimate. This is done by checking that the $2$-handles of the round $2$-handles are mapped onto themselves under the global monodromy prescribed by the sequence of Lefschetz handle attachments. This is a fairly simple task in our case, since all the vanishing cycles are collected on one genus. In fact, the monodromy splits, and on the nontrivial part it suffices to consider the product of Dehn twists on a torus given by $\tau_{a+b} \circ \tau_{2b-a} \circ \tau_{2a-b}$, where $a, b$ are standard generators of $\pi_1(T^2)$. This is comparable to the update in \cite{Lek}, and in particular the handle calculus for our example below contains an alternative description of the same modification around a negative \nolinebreak node. 

\begin{figure}[ht]  \label{BLFonCP2simplification}
\begin{center}
\includegraphics[scale=0.7]{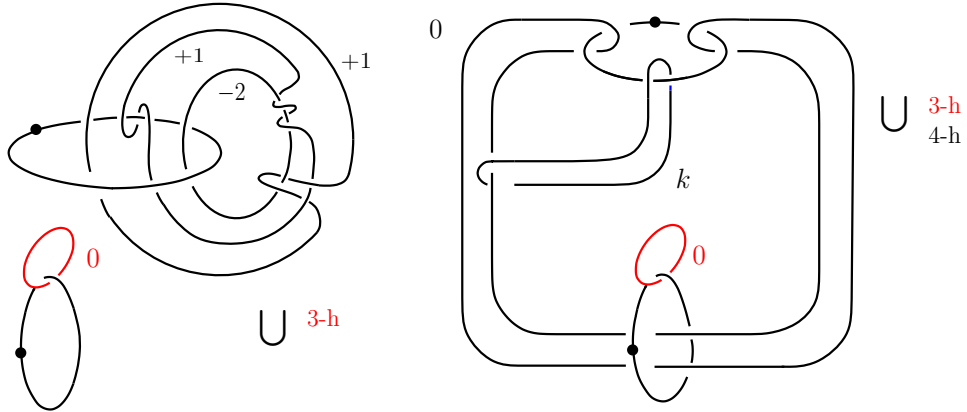}
\caption{\footnotesize Splitting of the handle diagram.}
\end{center}
\end{figure}

\enlargethispage*{\baselineskip}
The Figure 2 contains the complete diagram of our broken Lefschetz fibration where the $2$-handles corresponding to Lefschetz handles have fiber framing minus one. Using the $0$-framed $2$-handle of the round $2$-handle on the left, we can split the diagram as shown in Figure 3. Here we also switch to the dotted-circle notation to perform the rest of our handle slides and cancelations. All the handles in the subdiagram on the right, except for the $4$-handle of course, can be seen to cancel each other easily. This cancelation is identical to that of Example 3.5 given in \cite{B}. We can also remove the canceling $1$- and $2$- handle pair on the bottom left. It remains to simplify the rest of the diagram on the top left, where we have one $1$-handle, three $2$-handles, and a $3$-handle. First slide the $(+1)$-framed $2$-handles over the $(-2)$-framed $2$-handle to separate them from the $1$-handle, and cancel the $1$-handle against this $(-2)$-framed $2$-handle. After isotopies we obtain two $(+1)$-framed unknots, linking once. Now the obvious handle slide gives us two unknotted circles with framings $0$ and $+1$, respectively, where the former can be canceled against the $3$-handle. We are left with a $(+1)$-framed unknot and a $4$-handle, thus we obtain $\CP^2$.
\end{example}

\section{Getting connected fibers} \label{Sec:Connected}

We will start with a brief discussion of certain modifications of the round image which can be realized by homotopies of broken Lefschetz fibrations on the ambient $4$-manifold. These modifications are complementary to those studied in \cite{Lek}. Assume that the round image is immersed with double points, then it splits the base into several regions. The topology of fibers over any two \emph{adjacent} regions, i.e. regions that share a common arc boundary along $f(Z)$, are related by a fiberwise $2$-handle (and conversely by a $1$-handle) attachment. These fiberwise attachments relate a fiber $F_1$ to a fiber $F_2$ by removing an annulus from $F_1$ and capping off the two new boundary components by disks to get $F_2$. One can orient $Z$, and thus $f(Z)$, to indicate the direction of fiberwise $2$-handle attachments. Equivalently, we will do this by drawing small transverse arrows on the arcs of round image away from the double points, where the arrow will point in the direction of the $2$-handle attachments.

\begin{figure}[ht]  \label{R2inverse}
\begin{center}
\includegraphics[scale=0.7]{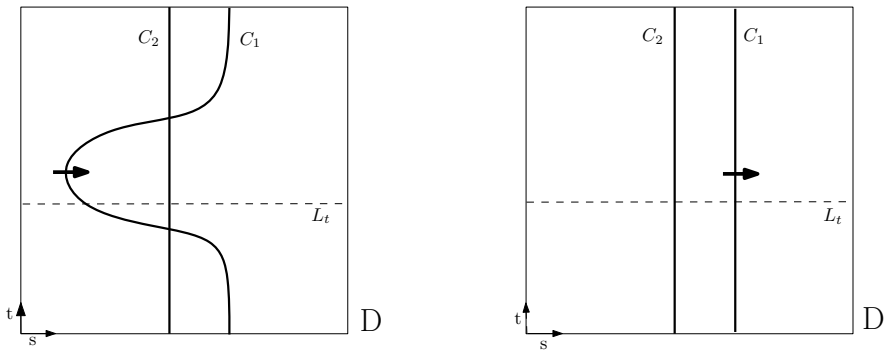}
\caption{\footnotesize A Reidemeister move for broken Lefschetz fibrations. The arrow on $C_2$ can point in either direction. }
\end{center}
\end{figure}

For what follows, we will need a specific move where one slides an arc $C_1$ in $f(Z)$ over another arc $C_2$ inside a disk $D \cong [0,1] \x [0,1]$, where $C_1$ and $C_2$ intersect at two points as shown in Figure 4. If we slice this picture by horizontal line segments $L_t(s)$, $s\in [0,1]$, we get a family of $3$-manifolds $M_t = f^{-1}(L_t)$, parametrized by $t \in [0,1]$. Each $M_t$ is determined by the regular fiber $f^{-1}(0,t)$ and the two handle additions over $C_1 \cap L_t$ and $C_2 \cap L_t$ as $s$ increases from $0$ to $1$. (The order of handle additions certainly depend on $t$ and for exactly two values of $t$ the critical points share the same image.) If the index of the handle attachment over $C_1 \cap L_t$ is greater than equal to that of the handle over $C_2 \cap L_t$, then this move in the base can be realized by a handle slide and therefore by an isotopy upstairs. This is the case for the slide depicted in Figure 4. So there is a family of isotopies parametrized by $t \in [0,1]$ determined by the slide in the $s$ direction, yielding an isotopy of $f^{-1}(D)$ which is identity close to $f^{-1}(\partial D)$. At each time of the isotopy we have a broken Lefschetz fibration. 

It is worth noting that similar arguments can be used to realize other moves by homotopies of broken Lefschetz fibrations. In this respect, the above move can be regarded as a Reidemeister II move, and it is not hard to verify that its `inverse' and Reidemeister III moves can also be used if we slide an arc in the direction of an arrow on it. Furthermore, one can slide the round image over a Lefschetz critical point provided that the arrow points in the sliding direction; this is ``pushing the Lefschetz critical point to the higher side'' described in \cite{B}. These observations all together allow one to slide an arc of the round image over several \textit{regions} in the direction of the associated arrow and get homotopic broken Lefschetz fibrations.

\begin{figure}[h] \label{FS}
\begin{center}
\includegraphics[scale=0.65]{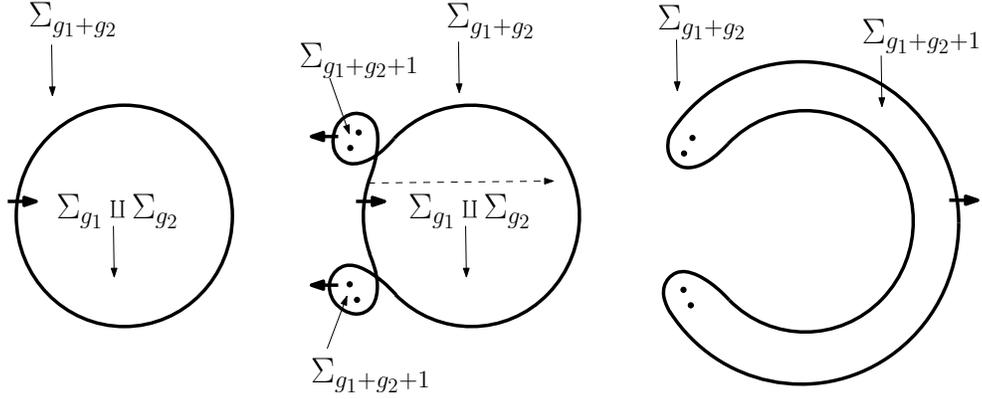}
\vspace{0.5cm}
\caption{\footnotesize ``Flip and slip''. On far left: The disk neighborhood of the round image of the original broken Lefschetz fibration with disconnected fibers in the interior. After two flips, we get the fibration in the middle where there are four new Lefschetz singularities and two double points. The direction of the slide to follow is indicated by the dashed arrow. On far right: The final fibration with connected fibers after the slip.}
\end{center}
\end{figure}

\begin{proof}[Proof of Proposition \ref{connected}] The proof will follow from a sequence of moves; called \emph{flip and slip} herein. By assumption $Z$ is connected and $f(Z)$ is embedded. Let $\Sigma_g$ denote a closed orientable surface of genus $g$. In Figure 5 we see a disk neighborhood of the \textit{lower side} in the base, where over the inner region split by the round image we have disconnected fibers $\Sigma_{g_1} \coprod \Sigma_{g_2} $ and over the outer part we have connected fibers $\Sigma{\,g_1 +g_2}$. Without loss of generality, we can leave the Lefschetz critical points out of this picture, since they could be pushed deep into the higher side after homotoping the original fibration. We first apply two simultaneous \emph{flips} as shown in the middle of Figure 5. This move is due to Auroux, and it can be described in two steps: One first applies the well-known flip move for generic maps, while getting two new cusp singularities and a new double point, and then brings in the cusp modification to trade the cusps with Lefschetz critical points. Since the latter can also be realized by a homotopy, we conclude that each flip upstairs gives a homotopic broken Lefschetz fibration. (See \cite{Lek} for the details.) We then make the big slide shown in Figure 5; the \emph{slip}. Using the guidance of the arrows, we can see that the topology of fibers over each region is as shown. The key point is that, when one passes from one region to another in the opposite direction of an arrow, that corresponds to a fiberwise $1$-handle attachment, which can not disconnect any fiber component. Along these homotopies the only time we fail to have a broken Lefschetz fibration is the first step of the two flips, where one gets two pairs of cuspidal singularities which are immediately smoothened out thereafter. 
\end{proof}

\begin{proof}[Proof of Corollary \ref{Always} ]
As noted in the introduction, we are granted with a broken Lefschetz pencil by \cite{ADK} whose blow-up gives a near-symplectic broken Lefschetz fibration with connected round locus and embedded round image. All we need to verify is that after applying Proposition \ref{connected} we get a near-symplectic fibration. It is clear that the involved modifications take place away from a regular neighborhood $N_0$ of a connected fiber $F$ of the initial broken Lefschetz fibration. Moreover, there is a near-symplectic form $\omega$ that evaluates positively on $F$. As we keep $N_0$ intact, the fibration with connected fibers we get at the end has $F$ as a fiber, too. Thus $\omega$ evaluates positively on all fibers of the resulting fibration as well, which allows us to apply the converse result of Auroux, Donaldson and Katzarkov \cite{ADK} in order to make the fibration near-symplectic. Throughout these homotopies the sections are preserved, so we can symplectically blow-down the exceptional spheres to obtain a broken Lefschetz pencil with the desired properties on the original $4$-manifold.
\end{proof}

\enlargethispage*{\baselineskip}
\begin{remark} \rm
We can in fact conclude more by observing that the modifications we use do not change the deformation class of the near-symplectic form. This is evident for the slip move, and it was argued for flips in \cite{Lek}. Also note that we invoked the existence result of \cite{ADK} so to produce a broken Lefschetz pencil that meets precisely with Perutz's requirements. If the round image is not asked to be embedded, then one can presumably use Theorem \ref{BLPexistence} instead along with the methods of this section for a more elementary proof.
\end{remark}

\end{document}